\def\NZQ{\Bbb}               
\def\KK{{\NZQ K}}
\def\frk{\frak}               
\def\Phi{{\frk n}}
\def\Phi{{\frk N}}
\def\MF{{\mathcal F}}
\def\MS{{\mathcal S}}
\def\opn#1#2{\def#1{\operatorname{#2}}} 
\opn\chara{char} \opn\length{\ell} \opn\pd{pd} \opn\rk{rk}
\opn\projdim{proj\,dim} \opn\injdim{inj\,dim} \opn\rank{rank}
\opn\depth{depth} \opn\grade{grade} \opn\height{height}
\opn\embdim{emb\,dim} \opn\codim{codim}
\opn\Tr{Tr} \opn\bigrank{big\,rank}
\opn\superheight{superheight}\opn\lcm{lcm}
\opn\trdeg{tr\,deg}
\opn\reg{reg} \opn\lreg{lreg} \opn\ini{in} \opn\lpd{lpd}
\opn\size{size}\opn\bigsize{bigsize}
\opn\cosize{cosize}\opn\bigcosize{bigcosize}
\opn\sdepth{sdepth}\opn\sreg{sreg}
\opn\link{link}\opn\fdepth{fdepth}
\opn\index{index}
\opn\index{index}
\opn\indeg{indeg}
\opn\N{N}
\opn\SSC{SSC}
\opn\SC{SC}
\opn\del{del}
\opn\div{div} \opn\Div{Div} \opn\cl{cl} \opn\Cl{Cl}
\opn\Spec{Spec} \opn\Supp{Supp} \opn\supp{supp} \opn\Sing{Sing}
\opn\Ass{Ass} \opn\Min{Min}\opn\Mon{Mon} \opn\dstab{dstab} \opn\astab{astab}
\opn\Syz{Syz}
\opn\reg{reg}
\opn\Ann{Ann} \opn\Rad{Rad} \opn\Soc{Soc}
\opn\Im{Im} \opn\Ker{Ker} \opn\Coker{Coker} \opn\Am{Am}
\opn\Hom{Hom} \opn\Tor{Tor} \opn\Ext{Ext} \opn\End{End}
\opn\Aut{Aut} \opn\id{id}
\opn\nat{nat}
\opn\pff{pf}
\opn\Pf{Pf} \opn\GL{GL} \opn\SL{SL} \opn\mod{mod} \opn\ord{ord}
\opn\Gin{Gin} \opn\Hilb{Hilb}\opn\sort{sort}
\opn\initial{init}
\opn\ende{end}
\opn\height{height}
\opn\type{type}
\opn\aff{aff} \opn\con{conv} \opn\relint{relint} \opn\st{st}
\opn\lk{lk} \opn\cn{cn} \opn\core{core} \opn\vol{vol}
\opn\link{link} \opn\star{star}\opn\lex{lex}\opn\Mon{Mon}\opn\Min{Min}
\opn\gr{gr}
\def\pot#1#2{#1[\kern-0.28ex[#2]\kern-0.28ex]}
\opn\dirlim{\underrightarrow{\lim}}
\opn\inivlim{\underleftarrow{\lim}}
\let\union=\cup
\let\Sect=\bigcap
\let\to=\rightarrow
\def\Implies{\ifmmode\Longrightarrow \else
        \unskip${}\Longrightarrow{}$\ignorespaces\fi}
\def\implies{\ifmmode\Rightarrow \else
        \unskip${}\Rightarrow{}$\ignorespaces\fi}
\def\iff{\ifmmode\Longleftrightarrow \else
        \unskip${}\Longleftrightarrow{}$\ignorespaces\fi}
\newtheorem{Theorem}{Theorem}[section]
 \newtheorem{Corollary}[Theorem]{Corollary}
\let\epsilon\varepsilon
\let\kappa=\varkappa
\def\qed{\ifhmode\textqed\fi
      \ifmmode\ifinner\quad\qedsymbol\else\dispqed\fi\fi}
\def\textqed{\unskip\nobreak\penalty50
       \hskip2em\hbox{}\nobreak\hfil\qedsymbol
       \parfillskip=0pt \finalhyphendemerits=0}
\def\dispqed{\rlap{\qquad\qedsymbol}}
\opn\dis{dis}
\def\pnt{{\raise0.5mm\hbox{\large\bf.}}}
\opn\Lex{Lex}
\opn\set{set}
\begin{document}

 \title{Simplicial complexes of whisker type}

\author {Mina Bigdeli, J\"urgen Herzog, Takayuki Hibi and  Antonio Macchia}

\address{Mina Bigdeli, Department  of Mathematics,  Institute for Advanced Studies in Basic Sciences (IASBS),
45195-1159 Zanjan, Iran} \email{m.bigdelie@iasbs.ac.ir}

\address{J\"urgen Herzog, Fachbereich Mathematik, Universit\"at Duisburg-Essen, Campus Essen, 45117
Essen, Germany} \email{juergen.herzog@uni-essen.de}

\address{Takayuki Hibi, Department of Pure and Applied Mathematics, Graduate School of Information Science and Technology,
Osaka University, Toyonaka, Osaka 560-0043, Japan}
\email{hibi@math.sci.osaka-u.ac.jp}

\address{Antonio Macchia, Fachbereich Mathematik und Informatik, Philipps-Universit\"at Marburg, Hans-Meerwein-Strasse 6, 35032 Marburg, Germany} \email{macchia.antonello@gmail.com}

\thanks{The paper was written while the first and the fourth author were  visiting the Department of Mathematics of University Duisburg-Essen. They want to express their thanks for the hospitality.}

 \begin{abstract}
Let $I\subset K[x_1,\ldots,x_n]$ be  a zero-dimensional monomial ideal, and $\Delta(I)$ be the simplicial complex whose Stanley--Reisner ideal is the polarization of $I$. It follows from a result of   Soleyman Jahan  that $\Delta(I)$ is shellable. We give a new short proof of this fact by providing an explicit shelling.  Moreover, we show that  $\Delta(I)$ is even vertex decomposable.
The ideal $L(I)$, which is defined to be the Stanley--Reisner ideal of the Alexander dual of  $\Delta(I)$, has a linear resolution which is  cellular
and supported on a regular CW-complex. All powers of $L(I)$ have a linear resolution. We compute $\depth L(I)^k$ and show that $\depth L(I)^k=n$ for all $k\geq n$.
 \end{abstract}

\subjclass[2010]{Primary 13C15, 05E40, 05E45; Secondary 13D02.}
\keywords{depth function, linear quotients, vertex decomposable, whisker complexes, zero-dimensional ideals}

 \maketitle

\section*{Introduction}

Graphs with whiskers have first been considered by Villarreal in \cite{V}. They all share  the nice property that they are Cohen-Macaulay. Various extensions of this concept and generalizations of his result have been considered in the literature, see  \cite{CN}, \cite{VTV}, \cite{HHKO} and \cite{F}. The edge ideal of a  whisker graph is obtained as the polarization of a monomial ideal $I\subset S$, where $S=K[x_1,\ldots,x_n]$ is the polynomial ring over a field $K$,  $I$ is generated in degree $2$ and  $\dim S/I=0$. In particular, $I$ contains the squares $x_1^2,\ldots,x_n^2$. More generally, given a simplicial complex $\Gamma$, the whisker complex $W(\Gamma)$ is studied in \cite{HH1}. Its facet ideal is the polarization of a monomial ideal in $S$ which contains all the $x_i^2$.  In \cite{HH1} the Stanley--Reisner ideal of the Alexander dual of the independence complex of $W(\Gamma)$  is called the face ideal of a simplicial complex.  Such face ideals, under a different name,  have also been studied in
\cite{LL}. In \cite{HH1}, as well as in \cite{LL}, it is shown that face ideals have a linear resolution. In  \cite{LL} Loiskekoski even shows that all powers of a face ideal have a linear resolution.

In the present paper we generalize the above mentioned results by considering the polarization of any monomial ideal $I\subset S$ with $\dim S/I=0$. The simplicial complex $\Theta(I)$, whose facet ideal  coincides with the polarization $I^\wp$ of $I$, is called of {\em whisker type} -- the whiskers being the simplices corresponding to the polarization of the pure powers contained in $I$. The independence complex of $\Theta(I)$, denoted $\Delta(I)$,   is characterized by the property that the Stanley--Reisner ideal $I_{\Delta(I)}$ coincides with $I^\wp$. Note that $F\in \Delta(I)$  if and only if $F$ does not contain any facet of $\Theta(I)$.

Given an arbitrary monomial ideal $I\subset S$, a multicomplex is associated with $I$, as defined by Popescu and the second author in \cite{HP}.  Soleyman Jahan defines in\break \cite[Proposition 3.8]{S} a bijection between the facets of the multicomplex given by $I$ and the facets of the simplicial complex  associated with $I^\wp$. In Theorem~\ref{min}  we present a short proof of this bijection when $\dim S/I=0$, by using multiplicity theory. This result allows us  to describe in Corollary~\ref{facets} the facets of $\Delta(I)$.  By applying the Eagon--Reiner Theorem it is then shown in Corollary~\ref{linear} that the ideal $L(I)$ has a linear resolution, where $L(I)$ is generated by the monomials $x_{1,a_1+1} \cdots x_{n,a_n+1}$ for which  $x_1^{a_1}\cdots x_n^{a_n}$ is a monomial in $S$  not belonging to $I$.

In the case that $\dim S/I=0$, the case we consider here, the corresponding multicomplex is pretty clean, see  \cite{HP}. Soleyman Jahan showed in \cite[Theorem 4.3]{S} that if $I$ defines a pretty clean multicomplex, then the simplicial complex associated with  $I^\wp$ is clean, which, by a theorem of Dress \cite{D}, implies that the simplicial complex attached to $I^\wp$ is shellable. Applied to our situation it follows that $\Delta(I)$ is shellable. We give a direct proof of this fact by showing that $L(I)$ has linear quotients. This provides an explicit shelling of $\Delta(I)$, and as  a side result we obtain a formula for the  Betti numbers of $L(I)$ in terms of the $h$-vector of $S/I$, see Corollary~\ref{betti}.  We conclude Section~2  with Corollary~\ref{cellular}, where it is shown that  the minimal  graded free resolution of $L(I)$ is cellular
and supported on a regular CW-complex. The proof is based on a result of  Dochtermann and Mohammadi\break \cite[Theorem~3.10]{DM}, who showed that the minimal graded free  resolution of any ideal with regular decomposition function, as defined in \cite{HT}, have such nice cellular structure.

In Section~3 we show that $\Delta(I)$ is not only shellable but even vertex decomposable. This was already  known  for whisker graphs (see \cite[Theorem~4.4]{DE}).
Finally in Section~4 we prove that all powers of $L(I)$ have linear quotients, see Theorem~\ref{powers}. Analyzing the linear quotients,  the depth function $f(k)=\depth S/L(I)^k$ can be computed. In Corollary~\ref{depth} a formula for the depth function   is given and $\lim_{k\to \infty} \depth S/L(I)^k$ is determined.

\section{The facets of the independence complex of a whisker type simplicial complex}

Throughout this paper $S$ denotes the polynomial ring $K[x_1,\ldots,x_n]$ and $I\subset S$ a monomial ideal with $\dim S/I=0$, unless otherwise stated. The (finite) set of monomials in $S$ which  belong to $S$ but not to $I$ will be denoted by $\Mon(S\setminus I)$. For an arbitrary monomial ideal $I$, we denote by $G(I)$ the unique minimal set of monomial generators of $I$. We will consider the polarization of $I$, denoted $I^\wp$. The polynomial ring in which $I^\wp$ is defined will be denoted by $S^\wp$.

In the following theorem  (cf. \cite[Proposition 3.8]{S}) we determine the set $\Min(I^\wp)$ of  minimal prime ideals of $I^\wp$.

\begin{Theorem}
\label{min}
Let $I\subset S$ be a monomial ideal with $\dim S/I=0$. The map $\phi$ which assigns to each monomial $u=x_1^{a_1}\cdots x_n^{a_n}\in S\setminus I$ the monomial prime ideal $\phi(u)=(x_{1,a_1+1}, \ldots, x_{n,a_n+1})\subset S^\wp$,  establishes a bijection between $\Mon(S\setminus I)$ and $\Min(I^\wp)$.
\end{Theorem}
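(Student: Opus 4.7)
The plan is to establish the bijection in three steps: injectivity, well-definedness (image in $\Min(I^\wp)$), and surjectivity by a multiplicity count, with the key structural input being that every minimal prime of $I^\wp$ has height exactly $n$.

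Injectivity is immediate since distinct exponent vectors yield distinct generating sets. For well-definedness, I first check $\phi(u) \supseteq I^\wp$: for any $v = x_1^{b_1}\cdots x_n^{b_n}\in G(I)$, the hypothesis $u \notin I$ forces $v \nmid u$, so some $b_i > a_i$, whence $x_{i,a_i+1}$ divides $v^\wp$. For the minimality of $\phi(u)$, I fix $i$ and use $\dim S/I = 0$: some power $x_i^N$ lies in $I$, so there is a least $N \geq 1$ with $u x_i^N \in I$. Any $v\in G(I)$ dividing $u x_i^N$ but not $u x_i^{N-1}$ must satisfy $b_j \leq a_j$ for $j \neq i$ and $b_i = a_i + N > a_i$, so $v^\wp$ enters $\phi(u)$ only through the variable $x_{i,a_i+1}$; this variable therefore cannot be removed from the generating set.

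For surjectivity I invoke multiplicity theory. Since $\dim S/I = 0$, one has $|\Mon(S\setminus I)| = \ell(S/I) = e(S/I)$; and since polarization realizes $S/I$ as the quotient of $S^\wp/I^\wp$ by the regular sequence of linear forms $x_{i,k}-x_{i,1}$, multiplicity is preserved, giving $e(S/I) = e(S^\wp/I^\wp)$. As $S^\wp/I^\wp$ is a reduced Stanley--Reisner ring, this multiplicity equals the number of minimal primes of minimal height $n = \height(I^\wp)$. The main obstacle is upgrading this to $|\Min(I^\wp)|$, i.e., showing that $I^\wp$ is unmixed. This rests on the polarization structure: any $P \in \Min(I^\wp)$ (generated by variables, since $I^\wp$ is squarefree) must contain at least one $x_{i,k}$ per $i$, because $x_i^{c_i}\in I$ polarizes to $x_{i,1}\cdots x_{i,c_i} \in P$; and $P$ cannot contain two such variables $x_{i,k}$ and $x_{i,k'}$ with $k < k'$, because polarization packs the factors of each variable consecutively --- every generator $v^\wp$ divisible by $x_{i,k'}$ is automatically divisible by $x_{i,k}$ --- so $x_{i,k'}$ would be redundant, contradicting minimality. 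Hence every minimal prime has exactly $n$ generators (and height $n$), the multiplicity count matches $|\Min(I^\wp)|$, and $\phi$ is a bijection.
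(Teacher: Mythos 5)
Your proposal is correct and follows the same overall architecture as the paper's proof: show $\phi$ is a well-defined injection into $\Min(I^\wp)$, then conclude bijectivity by matching cardinalities via $|\Mon(S\setminus I)|=\ell(S/I)=e(S/I)=e(S^\wp/I^\wp)=|\Min(I^\wp)|$, using that polarization is reduction modulo a regular sequence of linear forms. The differences are in two sub-steps, where you argue more elementarily. For the minimality of $\phi(u)$ the paper simply notes that $\height I^\wp=\height I=n=\height\phi(u)$, whereas you exhibit, for each $i$, a generator $v^\wp$ of $I^\wp$ lying in $(x_{i,a_i+1})$ but in no other $(x_{j,a_j+1})$; both work, yours being self-contained at the cost of the small auxiliary argument about the least $N$ with $ux_i^N\in I$. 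More substantially, you correctly identify that the multiplicity count only sees minimal primes of minimal height, and you prove unmixedness of $I^\wp$ directly (each minimal prime contains exactly one $x_{i,k}$ per index $i$, by the ``consecutive packing'' of polarization). The paper instead just asserts that $S^\wp/I^\wp$ is reduced and equidimensional, which it gets for free because $S/I$ is Cohen--Macaulay (being zero-dimensional) and Cohen--Macaulayness, hence equidimensionality, passes to $S^\wp/I^\wp$ via the regular sequence. Your combinatorial unmixedness argument is a nice, citation-free substitute for that step and is genuinely a point the paper glosses over.
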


\begin{proof}
We first observe that $\phi(\Mon(S\setminus I)) \subset S^\wp$. Indeed, since $\dim S/I=0$, there exists for each $1\leq i\leq n$ an integer $b_i>0$ such that $x_i^{b_i}\in I$ and $x_i^{b_i-1}\notin I$. It follows that  $S^\wp$ is the polynomial ring in the variables $x_{i,1},  \ldots, x_{i,b_i}$ with $1\leq i\leq n$.  Now let $u=x_1^{a_1}\cdots x_n^{a_n}\in \Mon(S\setminus I)$. Then $a_i<b_i$ for all $i$, and this implies that $\phi(u)\in S^\wp$.

Next we show that $\phi(\Mon(S\setminus I))\subset \Min(I^\wp)$. In fact, let $u=x_1^{a_1}\cdots x_n^{a_n}$ be an element in $\Mon(S\setminus I)$, and let $v\in G(I)$. We claim  that there exists an integer $i$ such that $x_{i,a_i+1}$ divides $v^\wp$, where $v^\wp$ is the polarization of $v$. From this claim it follows that $I^\wp\subset \phi(u)$. Since $\height I^\wp=\height I=n$ and since $\height \phi(u)=n$, we then see that $\phi(u)$ is in fact a minimal prime ideal of $I^\wp$.

Let $v=x_1^{b_1}\cdots x_n^{b_n}$. In order to prove the claim, note that $v^\wp=\prod_{i=1}^n(\prod_{j=1}^{b_i}x_{i,j})$. Since $v$ does not divide $u$, there exists an integer $i$ such that $b_i>a_i$. Therefore, $x_{i,a_i+1}$ divides $v^\wp$, as desired.

Clearly, $\phi$ is injective. We will show that $|\Mon(S\setminus I)|=|\Min(I^\wp)|$.  This will then imply that $\phi\: \Mon(S\setminus I)\to \Min(I^\wp)$ is bijective. In order to see that these two sets have the same cardinality we observe that the multiplicity $e(S/I)$ of $S/I$ is equal to the length $\ell(S/I)$ of $S/I$, because  $\dim S/I=0$, see \cite[Corollary 4.7.11(b)]{BH}.  Since $\ell(S/I)=\dim_KS/I$ and since  the elements of $\Mon(S\setminus I)$ form a $K$-basis of $S/I$, we see that $e(S/I)=\dim_KS/I=|\Mon(S \setminus I)|$. On the other hand, since $S/I$ is isomorphic to $S^\wp/I^\wp$ modulo a regular sequence of linear forms \cite[Proposition~1.6.2]{HHBook}, and since $S^\wp/I^\wp$ is reduced and equidimensional, \cite[Corollary 4.7.8]{BH} implies that $e(S/I)=e(S^\wp/I^\wp)=|\Min(I^\wp)|$.
\end{proof}

We denote by $\Delta(I)$ the simplicial complex whose Stanley-Reisner ideal is $I^\wp$. We view the variables $x_{i,j}\in S^\wp$ as the vertices of $\Delta(I)$. As an immediate consequence of Theorem~\ref{min} we obtain

\begin{Corollary}
\label{facets}
Let $\MS$  be the set of variables of $S^\wp$. Then $F\subset \MS$ is a facet of $\Delta(I)$ if and only if there exists $x_1^{a_1}\cdots x_n^{a_n}\in  \Mon(S \setminus I)$ such that $$F=\MS\setminus \{x_{1,a_1+1}, \ldots, x_{n,a_n+1}\}.$$
\end{Corollary}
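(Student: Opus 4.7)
The plan is to combine Theorem~\ref{min} with the standard correspondence between the facets of a simplicial complex and the minimal primes of its Stanley--Reisner ideal. Recall that for any simplicial complex $\Delta$ on vertex set $\MS$ with Stanley--Reisner ideal $I_\Delta$, one has the primary decomposition
\[
I_\Delta \;=\; \Sect_{F \in \MF(\Delta)} P_F, \qquad P_F \;=\; (x \,:\, x \in \MS \setminus F),
\]
where $\MF(\Delta)$ denotes the set of facets of $\Delta$. Moreover each $P_F$ is a minimal prime of $I_\Delta$, and this association $F \mapsto P_F$ is a bijection between $\MF(\Delta)$ and $\Min(I_\Delta)$.

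Applying this to $\Delta = \Delta(I)$, whose Stanley--Reisner ideal is $I^\wp$ by definition, a subset $F \subset \MS$ is a facet of $\Delta(I)$ if and only if the ideal generated by the variables in $\MS \setminus F$ is a minimal prime of $I^\wp$. By Theorem~\ref{min}, the minimal primes of $I^\wp$ are precisely the ideals $\phi(u) = (x_{1,a_1+1},\ldots,x_{n,a_n+1})$, where $u = x_1^{a_1}\cdots x_n^{a_n}$ ranges over $\Mon(S \setminus I)$. Equating $P_F$ with $\phi(u)$ yields $\MS \setminus F = \{x_{1,a_1+1},\ldots,x_{n,a_n+1}\}$, i.e.\ $F = \MS \setminus \{x_{1,a_1+1},\ldots,x_{n,a_n+1}\}$, which is exactly the desired description.

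There is no real obstacle here: the only thing to keep in mind is that Theorem~\ref{min} explicitly identifies the minimal primes (not merely the associated primes), which is what allows the facet-by-complement reading to go through without further work. The conclusion is then immediate.
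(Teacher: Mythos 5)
Your proof is correct and follows exactly the route the paper intends: the paper states this corollary as an immediate consequence of Theorem~\ref{min} via the standard bijection $F \mapsto P_{\MS\setminus F}$ between facets of $\Delta(I)$ and minimal primes of $I_{\Delta(I)}=I^\wp$, which is precisely what you spell out. Nothing is missing.
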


Since $\Delta(I)$ is Cohen--Macaulay,  the Eagon--Reiner Theorem \cite{EG} (see also \cite[Theorem~8.1.9]{HHBook}) implies that $I_{\Delta(I)^\vee}$ has a linear resolution. Here $\Delta(I)^\vee$  denotes the Alexander dual of $\Delta(I)$. Recall that, if $\Delta$ is an arbitrary simplicial complex on the vertex set $[n]=\{1,\ldots,n\}$ and  $I_\Delta=\Sect_{F} P_F$ where $P_F=(x_i\:\; i\in F)$, then $I_{\Delta^{\vee}}$ is generated by the monomials $u_F$ where $u_F=\prod_{i\in F}x_i$. These facts applied to our case yield

\begin{Corollary}
\label{linear}
The ideal $L(I)$ generated by the monomials $x_{1,a_1+1} \cdots x_{n,a_n+1}$, with $x_1^{a_1}\cdots x_n^{a_n}\in  \Mon(S\setminus I)$, has a linear resolution.
\end{Corollary}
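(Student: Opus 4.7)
The plan is two-step. First, I would establish that $\Delta(I)$ is Cohen--Macaulay and apply the Eagon--Reiner theorem to conclude that $I_{\Delta(I)^\vee}$ has a linear resolution. Second, I would identify $I_{\Delta(I)^\vee}$ explicitly with $L(I)$ using the facet description from Corollary~\ref{facets}.

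For the Cohen--Macaulay step, the ring $S/I$ is Artinian since $\dim S/I=0$, hence trivially Cohen--Macaulay (depth equals dimension equals zero). The polarization $S^\wp/I^\wp$ arises from $S/I$ by adjoining a regular sequence of linear forms, as recorded in \cite[Proposition~1.6.2]{HHBook} and already used in the proof of Theorem~\ref{min}. Since Cohen--Macaulayness is preserved under such modifications, $S^\wp/I^\wp$ is Cohen--Macaulay, so $\Delta(I)$ is a Cohen--Macaulay simplicial complex, and the Eagon--Reiner theorem applies.

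For the identification step, by the description of Alexander duality recalled immediately before the statement, $I_{\Delta(I)^\vee}$ is minimally generated by the monomials $\prod_{x\in\MS\setminus F}x$ as $F$ ranges over the facets of $\Delta(I)$. By Corollary~\ref{facets}, each such facet has the form $F=\MS\setminus\{x_{1,a_1+1},\ldots,x_{n,a_n+1}\}$ for a unique monomial $x_1^{a_1}\cdots x_n^{a_n}\in\Mon(S\setminus I)$, and the corresponding generator is $x_{1,a_1+1}\cdots x_{n,a_n+1}$. Running through all monomials of $\Mon(S\setminus I)$ then yields exactly the generating set defining $L(I)$, so $I_{\Delta(I)^\vee}=L(I)$.

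There is no serious obstacle: Cohen--Macaulayness passes through polarization essentially because $S/I$ is Artinian, and the bijection in Theorem~\ref{min} ensures that the resulting list of generators of $L(I)$ is without repetition, hence minimal. The only conceptual point worth flagging is that all the work needed for this corollary has already been concentrated into Theorem~\ref{min} and Corollary~\ref{facets}; once those are in place, the present statement is a direct harvest of the Eagon--Reiner theorem.
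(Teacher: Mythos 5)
Your proposal is correct and follows essentially the same route as the paper: Cohen--Macaulayness of $\Delta(I)$ (via polarization of the Artinian ring $S/I$), the Eagon--Reiner theorem, and the identification of the generators of $I_{\Delta(I)^\vee}$ with those of $L(I)$ through the facet description coming from Theorem~\ref{min}. The only difference is that you spell out the justification for Cohen--Macaulayness, which the paper takes for granted.
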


In the following we consider the special case that $x_i^2\in I$ for all $i$. In that case all other generators of $I$ are square-free. In simplified notation, the polarization $I^\wp$ of $I$  is generated by the square-free monomials in $I$ and by the monomials $x_iy_i$  for $i=1,\ldots,n$.

Let $\Gamma$ be the simplicial complex with $I(\Gamma)=J$ and $W(\Gamma)$ be the simplicial complex with $I(W(\Gamma))= (J,x_1y_1,\ldots,x_ny_n)$. The edges of $W(\Gamma)$ corresponding to the $x_iy_i$  are called the {\em whiskers} of $W(\Gamma)$ and $W(\Gamma)$  is called the {\em whisker complex} of $\Gamma$.

Given a simplicial complex $\Sigma$, the independence complex $\Lambda$ of $\Sigma$ is the simplicial complex such that $I_\Lambda=I(\Sigma)$. Notice that $F\in \Lambda$ if and only if no face of $\Sigma$ is contained in $F$.

As a special case of Theorem~\ref{min} we recover \cite[Theorem 1.1]{HH1}.

\begin{Corollary}
\label{newproof}
Let $\Gamma$ be a simplicial complex on the vertex set $[n]$, $I'=I(\Gamma)$ the facet ideal of $\Gamma$  and $W(\Gamma)$ its whisker complex. Let $I=(I', x_1^2,x_2^2,\ldots,x_n^2)$. Then $\Delta(I)$ is the independence complex of $W(\Gamma)$ and $L(I)$ is generated by the monomials $\prod_{i\in [n]\setminus F}x_i\prod_{i\in  F}y_i$ with $F\in \Delta$, where $\Delta$ is the independence complex of $\Gamma$.
\end{Corollary}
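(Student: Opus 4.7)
The plan is to unwind the definitions in light of the explicit description of the polarization in this square-plus-squarefree setting, and then read off both claims from Corollary~\ref{facets} / Corollary~\ref{linear}.

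First I would describe $I^\wp$ explicitly. Since every generator of $I$ other than the $x_i^2$ is squarefree (because $I'$ is a facet ideal), those generators polarize to themselves. The generators $x_i^2$ polarize to $x_{i,1}x_{i,2}$, and after renaming $x_i=x_{i,1}$ and $y_i=x_{i,2}$ this is the monomial $x_iy_i$. Consequently $I^\wp=(I',x_1y_1,\ldots,x_ny_n)=I(W(\Gamma))$. Since $\Delta(I)$ is by definition the simplicial complex whose Stanley--Reisner ideal is $I^\wp$, and since the independence complex $\Lambda$ of $W(\Gamma)$ is defined by $I_\Lambda=I(W(\Gamma))$, the first assertion $\Delta(I)=\Lambda$ follows immediately.

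For the second assertion I would apply Corollary~\ref{facets} (or equivalently Corollary~\ref{linear}) and determine $\Mon(S\setminus I)$ explicitly. Because $x_i^2\in I$ for every $i$, any monomial $u=x_1^{a_1}\cdots x_n^{a_n}\notin I$ must satisfy $a_i\in\{0,1\}$, so $u$ is squarefree; writing $F=\{i:a_i=1\}$, we have $u=\prod_{i\in F}x_i$. The condition $u\notin I$ is equivalent to saying that no generator of $I'$ divides $u$, i.e.\ no facet of $\Gamma$ is contained in $F$, which is exactly the condition $F\in\Delta$ for $\Delta$ the independence complex of $\Gamma$.

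Finally I would translate the generators of $L(I)$ back through the renaming. Corollary~\ref{linear} says $L(I)$ is generated by the monomials $x_{1,a_1+1}\cdots x_{n,a_n+1}$ with $x_1^{a_1}\cdots x_n^{a_n}\in\Mon(S\setminus I)$. With $a_i\in\{0,1\}$, the factor $x_{i,a_i+1}$ equals $x_i$ when $i\notin F$ and $y_i$ when $i\in F$, so the generator attached to $F$ is exactly $\prod_{i\in[n]\setminus F}x_i\prod_{i\in F}y_i$, and $F$ ranges over $\Delta$ by the previous paragraph. I do not expect any real obstacle: the entire argument is bookkeeping once the polarization of the $x_i^2$ is written down and Corollary~\ref{facets} is invoked.
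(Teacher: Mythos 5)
Your argument is correct and is exactly the route the paper intends: the paper gives no separate proof, relying on the preceding remarks that the squarefree generators polarize to themselves and the $x_i^2$ to $x_iy_i$, together with Corollary~\ref{facets}/\ref{linear}. Your unwinding of $\Mon(S\setminus I)$ as the independent sets of $\Gamma$ and the translation $x_{i,a_i+1}\mapsto x_i$ or $y_i$ is precisely this bookkeeping, so nothing is missing.
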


\section{Linear quotients}

Let $I\subset S$ be a monomial ideal with $\dim S/I=0$. The main purpose of this section is to show that $L(I)$ not only has a linear resolution, but even has linear quotients.

\begin{Theorem}
\label{quotients}
The ideal $L(I)$ has linear quotients.
\end{Theorem}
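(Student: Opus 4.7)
The plan is to order the minimal generators of $L(I)$ using the standard lexicographic order on $\Mon(S\setminus I)$ and verify that this witnesses the linear quotients property. Set $\mu(u) := x_{1,a_1+1}\cdots x_{n,a_n+1}$ for each $u = x_1^{a_1}\cdots x_n^{a_n}\in \Mon(S\setminus I)$; by Corollary~\ref{linear} the $\mu(u)$ are exactly the minimal generators of $L(I)$, all of degree~$n$. I propose to arrange them so that $\mu(u)$ precedes $\mu(v)$ precisely when $u <_{\lex} v$ with respect to the lex order on $S$ (with $x_1 > \cdots > x_n$).

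For the verification, fix $u = x_1^{a_1}\cdots x_n^{a_n} \in \Mon(S\setminus I)$ and let $v = x_1^{b_1}\cdots x_n^{b_n} \in \Mon(S\setminus I)$ be any strictly earlier generator, so $v <_{\lex} u$. Let $T = \{i : a_i \neq b_i\}$ and $j = \min T$; by definition of lex order, $b_j < a_j$. Define $w$ to be the monomial obtained from $u$ by replacing the exponent of $x_j$ with $b_j$, i.e.
$$w = x_1^{a_1}\cdots x_{j-1}^{a_{j-1}}\, x_j^{b_j}\, x_{j+1}^{a_{j+1}}\cdots x_n^{a_n}.$$
Then $w$ divides $u$, so $w \in \Mon(S\setminus I)$ since that set is closed under divisors; $w <_{\lex} u$ because the smallest coordinate where they differ is $j$, with $w_j = b_j < a_j = u_j$; and because $w$ and $u$ agree outside coordinate $j$, the colon $\mu(w):\mu(u) = x_{j,b_j+1}$ is a single variable. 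Finally, this variable divides $\mu(v):\mu(u) = \prod_{i \in T} x_{i,b_i+1}$ because $j \in T$. Hence the generator $\mu(v):\mu(u)$ of the colon ideal $\bigl((\mu(v))_{v <_{\lex} u}\bigr):\mu(u)$ is divisible by a variable that itself lies in this colon ideal, and linear quotients follow.

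As a byproduct, running the construction over all admissible single-coordinate reductions identifies the minimal generating set of variables of $\bigl((\mu(v))_{v <_{\lex} u}\bigr):\mu(u)$ as exactly $\{x_{j,c} : 1 \leq j \leq n,\; 1 \leq c \leq a_j\}$, of cardinality $\deg u = a_1+\cdots+a_n$. This is precisely the input needed for the Betti-number formula of Corollary~\ref{betti} in terms of the $h$-vector of $S/I$, and for the regular decomposition function feeding into Corollary~\ref{cellular}. There is no real obstacle: the argument is a direct verification, the only delicate point being to recognize that lex order is compatible with the divisibility closure of $\Mon(S\setminus I)$, so the one-coordinate reduction $u \leadsto w$ never escapes the index set.
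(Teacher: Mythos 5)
Your proof is correct. The ordering you use (lex order on the underlying monomials of $\Mon(S\setminus I)$, smaller first) is a linear extension of the dominance order the paper works with, and your identification of the colon ideal as $\bigl(x_{j,c} : 1 \leq j \leq n,\ 1 \leq c \leq a_j\bigr)$ agrees with the paper's formula. The difference is in the verification: the paper argues by induction on $\ell(S/I)$, peeling off a dominance-maximal monomial $u$, writing $L(I) = (L(I'), \mu(u))$ for $I' = I + (u)$, and computing only the single colon ideal $L(I'):\mu(u)$; you instead verify the standard linear-quotients criterion directly for an arbitrary generator, producing for each earlier $v$ a one-coordinate reduction $w$ of $u$ with $\mu(w):\mu(u)$ a variable dividing $\mu(v):\mu(u)$. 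Your route is self-contained and avoids the induction (and the small bookkeeping that the initial segments of the order are themselves of the form $L(I')$), while the paper's inductive setup is what directly feeds the mapping-cone computation of the Betti numbers in Corollary~\ref{betti}; your closing remark correctly notes that your colon computation supplies the same input. The two delicate points you rely on --- that $\Mon(S\setminus I)$ is closed under taking divisors, and that $v <_{\lex} u$ forces $b_j < a_j$ at the first index of disagreement --- are both handled correctly.
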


\begin{proof}
Let $u,v\in G(L(I))$, $u=x_{1,a_1+1} \cdots x_{n,a_n+1}$ and $v=x_{1,b_1+1} \cdots x_{n,b_n+1}$. We set $u\leq v$  if $a_i\leq  b_i$ for all $i$, and extend this partial order to a total order on $G(L(I))$. We claim that, with respect to this total order of the monomial generators of $L(I)$, the ideal $L(I)$ has linear quotients. Indeed, let $x_{1,a_1+1} \cdots x_{n,a_n+1}$ be the largest element in $G(L(I))$. Then $u=x_1^{a_1} \cdots x_n^{a_n}\in \Mon(S\setminus I)$ and $x_iu \in I$ for all $i$. Set $I' = I + (u)$. Then the polarization $(I')^\wp$ of $I'$ is equal to  $I_{\Delta(I')}$. Notice that $L(I') \subset L(I)$ and $\ell(S/I') < \ell(S/I)$. In particular, $L(I) = (L(I'),x_{1,a_1+1} \cdots x_{n,a_n+1})$. Arguing by induction  on the length, we may assume that $L(I')$ has linear quotients. Thus we just need to compute the colon ideal $Q=L(I') : x_{1,a_1+1} \cdots x_{n,a_n+1}$. We claim that
\begin{eqnarray}
\label{q}
Q=(x_{1,1},x_{1,2},\ldots,x_{1,a_1},x_{2,1},\ldots,x_{2,a_2},\ldots, x_{n,1},\ldots,
x_{n,a_n}).
\end{eqnarray}
Suppose that $j \in \{1,\dots,a_i\}$ for some $i$. Then $x_1^{a_1} \cdots x_i^{j-1} \cdots x_n^{a_n} \in \Mon(S \setminus I)$ and $$\phi(x_1^{a_1} \cdots x_i^{j-1} \cdots x_n^{a_n}) = x_{1,a_1+1} \cdots x_{i,j} \dots x_{n,a_n+1} \in L(I').$$ It follows that $x_{i,j} \in Q$.

On the other hand, the elements $v/ \gcd(v, x_{1,a_1+1} \cdots x_{n,a_n+1})$ with $v\in G(L(I'))$ generate $Q$, see for example \cite[Proposition~1.2.2]{HHBook}. In fact, let $v\in G(L(I'))$. Then $v=x_{1,c_1+1}\cdots x_{n,c_n+1}$ and $x_1^{c_1}\cdots x_n^{c_n}\in \Mon(S\setminus I')$. There exists $i$ such that $c_i < a_i$ because $x_iu \in I$ for all $i$. Hence  $x_{i,c_i+1}$ does not divide $x_{1,a_1+1} \cdots x_{n,a_n+1}$, and  therefore $x_{i,c_i+1}$ divides $v/ \gcd(v, x_{1,a_1+1} \cdots x_{n,a_n+1})$. Since $c_i+1\leq a_i$, the desired conclusion follows.
\end{proof}

\begin{Corollary}
\label{betti}
For every $i \geq 0$,
\[
\beta_i (S^\wp/L(I)) = \sum_{j \geq 0} h_j \binom{j}{i-1},
\]
where $h_j = h_j(S/I)$ is the $j$-th component of the $h$-vector of $S/I$. In particular, $\projdim S^\wp/L(I)=\max\{\deg u\: u\in \Mon(S\setminus I)\}+1$.
\end{Corollary}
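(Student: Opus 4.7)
The plan is to apply the Herzog--Takayama formula for the Betti numbers of an ideal with linear quotients (see, e.g., \cite[Corollary~8.2.2]{HHBook}): if a monomial ideal $L$ is equigenerated and has linear quotients with respect to an ordering $f_1,\ldots,f_m$ of $G(L)$, then
\[
\beta_i(L)=\sum_{k=1}^{m}\binom{r_k}{i},
\]
where $r_k$ denotes the number of minimal generators of the colon ideal $(f_1,\ldots,f_{k-1})\colon f_k$. By Theorem~\ref{quotients} this formula is available for $L(I)$, and the remaining task is to identify the numbers $r_k$.

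The key input is extracted from the inductive proof of Theorem~\ref{quotients}. That argument, which proceeds by peeling off the currently largest generator and inducting on $\ell(S/I)$, shows that for every $u=x_1^{a_1}\cdots x_n^{a_n}\in\Mon(S\setminus I)$ the colon ideal preceding the associated generator $f_u=x_{1,a_1+1}\cdots x_{n,a_n+1}$ equals
\[
(x_{1,1},\ldots,x_{1,a_1},x_{2,1},\ldots,x_{2,a_2},\ldots,x_{n,1},\ldots,x_{n,a_n}),
\]
a prime ideal minimally generated by $a_1+a_2+\cdots+a_n=\deg u$ variables. Consequently $r_{f_u}=\deg u$, and the displayed formula yields
\[
\beta_{i-1}(L(I))=\sum_{u\in\Mon(S\setminus I)}\binom{\deg u}{i-1}.
\]

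To finish I would regroup the sum by degree. Since $\dim S/I=0$, the ring $S/I$ is Artinian and its Hilbert series is the polynomial $\sum_j h_j t^j$ with $h_j=\dim_K(S/I)_j$ equal to the number of monomials of degree $j$ in $\Mon(S\setminus I)$. Combining this regrouping with $\beta_i(S^\wp/L(I))=\beta_{i-1}(L(I))$ for $i\geq 1$ yields the asserted identity. For the projective dimension statement, the linear quotients formula further implies $\projdim L(I)=\max_k r_k=\max\{\deg u:u\in\Mon(S\setminus I)\}$, and the assertion follows from $\projdim S^\wp/L(I)=\projdim L(I)+1$.

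I do not foresee a substantive obstacle: the only mildly delicate point is to record that the colon-ideal computation carried out in the proof of Theorem~\ref{quotients} for the top generator remains valid, through the induction, for every generator in the chosen total order. Once this is noted, the corollary is a routine consequence of the Herzog--Takayama formula together with the elementary fact that the Hilbert function of an Artinian standard graded algebra equals its $h$-vector.
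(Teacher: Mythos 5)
Your proposal is correct and follows essentially the same route as the paper: the paper builds the resolution of $S^\wp/L(I)$ by iterated mapping cones over the short exact sequences $0 \to L(I)/L(I') \to S^\wp/L(I') \to S^\wp/L(I) \to 0$, using the colon ideal $Q$ computed in Theorem~\ref{quotients}, which is exactly the mechanism behind the linear-quotients Betti number formula you invoke. Citing that formula directly, together with the identification $r_{f_u}=\deg u$ and the fact that the $h$-vector of an Artinian quotient is its Hilbert function, is just a repackaging of the paper's argument.
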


\begin{proof}
As in the previous proof, let $u=x_1^{a_1} \cdots x_n^{a_n}\in \Mon(S \setminus I)$ with  $x_iu \in I$ for all $i$. Set $I' = I + (u)$, and  consider the short exact sequence $$0 \rightarrow L(I)/L(I') \rightarrow S^\wp/L(I') \rightarrow S^\wp/L(I) \rightarrow 0.$$ Notice that $L(I)/L(I') \cong S^{\wp}/Q (-n)$ with $Q$ as in (\ref{q}). Hence its minimal free resolution is the Koszul complex $\KK$ on the variables $x_{i,j}$  with $x_{i,j}\in G(Q)$. Thus the   minimal free resolution of $S^\wp/L(I)$ can be obtained as a mapping cone of $\KK$  and the minimal free resolution of $S^\wp/L(I')$. Therefore $\beta_0 (S^\wp/L(I)) = \beta_0 (S^\wp/L(I'))$, and for $i \geq 1$ we obtain
\begin{eqnarray*}
\beta_i (S^\wp/L(I)) &=& \beta_i(S^\wp/L(I')) + \rank (K_{i-1}) = \beta_i(S^\wp/L(I')) + \binom{\deg u}{i-1} \\
&=& \sum_{u \in \Mon(S \setminus I)} \binom{\deg u}{i-1} = \sum_{j \geq 0} h_j \binom{j}{i-1}.
\end{eqnarray*}
\end{proof}

It is easily seen that the geometric realization of $\Delta(I)$ is a sphere if $I$ is a complete intersection, and a ball otherwise. Both topological spaces admit shellable triangulations, though in general not all triangulations of these spaces are shellable, see \cite{Ru} and \cite{Li}. However, due  to Theorem~\ref{quotients} we have

\begin{Corollary}
\label{shelling}
The simplicial complex  $\Delta(I)$ is shellable.
\end{Corollary}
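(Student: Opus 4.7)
The plan is to derive shellability of $\Delta(I)$ directly from Theorem~\ref{quotients} by invoking the well-known dictionary between linear quotients of the Stanley--Reisner ideal of the Alexander dual and shellability (see, e.g., \cite[Theorem~8.2.20]{HHBook}). More concretely, if $\Delta$ is a simplicial complex on vertex set $\MS$ and $I_{\Delta^\vee}=(u_1,\ldots,u_m)$ has linear quotients with respect to the listed order of its minimal monomial generators, then the facets $F_i=\MS\setminus\supp(u_i)$ form a shelling of $\Delta$ in some order induced by the linear-quotients order.

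First, I would identify $L(I)$ with $I_{\Delta(I)^\vee}$. By Corollary~\ref{facets}, each facet of $\Delta(I)$ is of the form $F_u=\MS\setminus\{x_{1,a_1+1},\ldots,x_{n,a_n+1}\}$ for some $u=x_1^{a_1}\cdots x_n^{a_n}\in\Mon(S\setminus I)$, and the corresponding generator of $L(I)$ is $\phi(u)^{\mathrm{sqfr}}=x_{1,a_1+1}\cdots x_{n,a_n+1}$, whose support is exactly the complement of $F_u$. Hence the bijection between $G(L(I))$ and the facets of $\Delta(I)$ is the one required.

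Next, I would invoke Theorem~\ref{quotients}, which provides a total order on $G(L(I))$ refining the partial order $u\le v$ iff $a_i\le b_i$ for all $i$, with respect to which $L(I)$ has linear quotients. Transporting this order via the bijection above yields a total order of the facets of $\Delta(I)$. The standard translation then converts the linear-quotient condition $(u_1,\ldots,u_{k-1}):u_k$ being generated by a subset of the variables $\{x_{i,j}\}$ into the shelling condition: for each $k\ge 2$, the intersection of the facet $F_k$ with the subcomplex generated by $F_1,\ldots,F_{k-1}$ is pure of codimension one in $F_k$, since the variables generating the colon ideal correspond exactly to the vertices of $F_k$ whose removal yields codimension-one faces contained in some earlier $F_j$.

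There is no real obstacle here: the main point is simply to note that Theorem~\ref{quotients} has already done all of the work, and the conclusion follows by the standard Alexander-duality translation. If desired, one can also observe that the concrete colon ideal $Q$ computed in equation~(\ref{q}) makes the correspondence transparent, since each generator $x_{i,j}\in Q$ with $j\le a_i$ corresponds to the facet $F_{u'}$ with $u'=x_1^{a_1}\cdots x_i^{j-1}\cdots x_n^{a_n}$, which shares with $F_u$ a codimension-one face, exhibiting the shelling property directly.
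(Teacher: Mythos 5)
Your proposal is correct and is exactly the argument the paper intends: the corollary is stated as an immediate consequence of Theorem~\ref{quotients} via the standard equivalence between linear quotients of $I_{\Delta^\vee}$ (here $L(I)=I_{\Delta(I)^\vee}$, by Corollary~\ref{facets}) and shellability of $\Delta$. Your explicit translation of the colon-ideal generators into the codimension-one intersection condition matches the shelling the paper alludes to.
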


As a further consequence of Theorem~\ref{quotients} we have

\begin{Corollary}
\label{cellular}
The graded minimal free resolution of $L(I)$ is cellular
and supported on a regular CW-complex.
\end{Corollary}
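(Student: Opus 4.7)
The plan is to apply Theorem~3.10 of Dochtermann--Mohammadi \cite{DM}, which asserts that if a monomial ideal has linear quotients with a \emph{regular} decomposition function in the sense of Herzog--Takayama \cite{HT}, then its minimal graded free resolution is cellular and supported on a regular CW-complex. Theorem~\ref{quotients} already supplies the linear quotients of $L(I)$ with respect to any total order refining the componentwise partial order on exponent vectors, so the only remaining task is to verify regularity of the associated decomposition function $g$.

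Concretely, I will check that for every generator $u_a=x_{1,a_1+1}\cdots x_{n,a_n+1}\in G(L(I))$ and every $x_{i,j}\in \set(u_a)$ one has $\set(g(x_{i,j}u_a))\subseteq \set(u_a)$. From formula (\ref{q}) in the proof of Theorem~\ref{quotients} one reads off
\[
\set(u_a)=\{x_{k,\ell}\:\; 1\leq k\leq n,\ 1\leq \ell\leq a_k\},
\]
and in particular $\set$ is monotone in the componentwise partial order: whenever $b\leq a$ componentwise (with both $a$ and $b$ being exponent vectors of elements of $\Mon(S\setminus I)$), $\set(u_b)\subseteq \set(u_a)$.

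To identify $g(x_{i,j}u_a)$, I will argue that a generator $u_b$ of $L(I)$ divides $x_{i,j}u_a$ precisely when $b_k=a_k$ for all $k\neq i$ and $b_i\in\{j-1,a_i\}$, and that both options correspond to monomials in $\Mon(S\setminus I)$ since this set is closed under taking divisors. Hence exactly two generators divide $x_{i,j}u_a$, and the one that is smaller in the chosen total order is $u_{a''}$, where $a''$ is obtained from $a$ by replacing $a_i$ with $j-1$. Since $a''\leq a$ componentwise, the monotonicity of $\set$ gives the required inclusion. The main technical ingredient is the explicit identification of the only two generators dividing $x_{i,j}u_a$, which is essentially combinatorial; once that is in hand, regularity of $g$ and the conclusion follow formally from \cite{DM}.
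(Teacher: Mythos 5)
Your proposal is correct and follows essentially the same route as the paper: both apply \cite[Theorem 3.10]{DM} and verify regularity of the decomposition function using the description of $\set(u)$ coming from formula (\ref{q}), with $b(x_{i,j}u)$ identified as the generator obtained by replacing $a_i$ with $j-1$. Your explicit enumeration of the (exactly two) generators dividing $x_{i,j}u_a$ and the monotonicity of $\set$ in the componentwise order just spell out details the paper states more tersely.
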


\begin{proof}
Since $L(I)$ has linear quotients we may apply \cite[Theorem 3.10]{DM} and only need to show that $L(I)$ admits a regular decomposition function. In order to explain this, let $J=(u_1,\ldots,u_m)$ be an ideal with linear quotients  with respect to the given order of the generators. The {\em decomposition function} of $J$  (with respect to the given order of the generators of $J$) is the map $b\:  \Mon(J) \to G(J)$ with $b(u) = u_j$, where $j$ is the smallest number such that $u\in (u_1,\ldots,u_j)$. For each $u_j\in G(J)$, let  $\set(u_j)$ be the set of all $x_i$ such that $x_iu_j\in (u_1,\ldots,u_{j-1})$. According to \cite{HT}, the decomposition function $b$ is called {\em regular}, if $\set(b(x_iu_j))\subset \set(u_j)$ for all $u_j\in G(J)$ and all $x_i\in \set(u_j)$.

Now let $u\in G(L(I))$, $u=x_{1,a_1+1} \cdots x_{n,a_n+1}$. By (\ref{q})  we have
\[
\set(u)=\{x_{1,1},x_{1,2},\ldots,x_{1,a_1},x_{2,1},\ldots,x_{2,a_2},\ldots, x_{n,1},\ldots,
x_{n,a_n}\}.
\]
Let $x_{i,j}\in \set(u)$. Then $b(x_{i,j}u)=x_{i,j}(u/x_{i,a_i+1})$, and so
\[
\set(b(x_{i,j}u))=\set(u)\setminus\{x_{i,j+1},\ldots,x_{i,a_i}\}\subset \set(u),
\]
as desired.
\end{proof}

\section{Vertex decomposability}

In \cite[Theorem~4.4]{DE} it was shown that for any graph, the independence complex of its whisker graph is vertex decomposable. Here we extend this result by showing that $\Delta(I)$ is vertex decomposable for any monomial ideal $I$ with $\dim S/I=0$.
Recall that  a simplicial complex $\Delta$ is called {\em vertex decomposable} if $\Delta$  is
a simplex, or $\Delta$ contains a vertex $v$ such that
\begin{itemize}
\item[(i)]  any facet of $\del_{\Delta}∆(v)$  is a facet of $\Delta$, and
\item[(ii)] both $\del_{\Delta}(v)$ and $\link_{\Delta}(v)$  are vertex decomposable.
\end{itemize}
Here $\link_\Delta(v)=\{G\in \Delta\:\; v\not\in G \text{ and } G\union \{v\}\in \Delta\}$ is the {\em link} of $v$ in $\Delta$ and $\del_\Delta(v) =\{G\in \Delta\:\; v\not\in G \}$ is the {\em deletion} of $v$ from $\Delta$.

\medskip
A vertex $v$  which satisfies condition (i) is called a {\em shedding vertex} of $\Delta$.

\medskip
For the proof of the next result we observe the following fact: let $\Delta$ be a simplicial complex and $v$ a vertex not belonging to $\Delta$. The {\em cone} of $v$ over $\Delta$, denoted by $v\ast \Delta$, is the simplicial complex such that $\MF(v\ast \Delta)=\{\{v\}\union F\:\;  F\in \MF(\Delta)\}$. If $\Delta$ is vertex decomposable, then $v\ast \Delta$ is again vertex decomposable (with respect to the same shedding vertex).

\begin{Theorem}
\label{decomposable}
Let $I$ be a monomial ideal in $S=K[x_1, \ldots, x_n]$ with $\dim S/I=0$. Then $\Delta(I)$ is vertex decomposable.
\end{Theorem}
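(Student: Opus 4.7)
The plan is to induct on the length $\ell=\ell(S/I)$. The base case $\ell=1$ forces $I=(x_1,\ldots,x_n)$, in which case $\Delta(I)=\{\emptyset\}$ is a degenerate simplex and hence vertex decomposable. For the inductive step, $\ell\geq 2$ implies that some $b_i\geq 2$, where $b_i$ is the least exponent with $x_i^{b_i}\in I$; after relabeling the variables we may assume $b_1\geq 2$, and I take $v:=x_{1,1}$ as candidate shedding vertex.

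\emph{Shedding and deletion.} By Corollary~\ref{facets}, the facets of $\Delta(I)$ are $F_u=\mathcal{S}\setminus\{x_{1,a_1+1},\ldots,x_{n,a_n+1}\}$, parametrized by $u=x_1^{a_1}\cdots x_n^{a_n}\in\Mon(S\setminus I)$. If $v\in F_u$, i.e.\ $a_1\geq 1$, then $u':=x_2^{a_2}\cdots x_n^{a_n}$ divides $u$ and hence lies in $\Mon(S\setminus I)$; comparing the complements of $F_u$ and $F_{u'}$ yields $F_u\setminus\{v\}\subseteq F_{u'}$ and $v\notin F_{u'}$. This verifies the shedding condition (i) and identifies the facets of $\del_\Delta(v)$ as precisely the $F_{u'}$ with $a'_1=0$. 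Each such facet contains the full set $\{x_{1,2},\ldots,x_{1,b_1}\}$, so $\del_\Delta(v)=\sigma\ast\Delta(\bar I)$, where $\sigma$ is the simplex on $\{x_{1,2},\ldots,x_{1,b_1}\}$ and $\bar I\subset K[x_2,\ldots,x_n]$ is the zero-dimensional monomial ideal generated by those minimal generators of $I$ not divisible by $x_1$. Since $\ell(K[x_2,\ldots,x_n]/\bar I)=\ell(S/(I+(x_1)))<\ell$, the inductive hypothesis yields vertex decomposability of $\Delta(\bar I)$, and iterating the cone construction recalled before the statement gives the same for $\del_\Delta(v)$.

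\emph{Link.} Here I expect the main difficulty. Let $\tilde I:=I:x_1$; it is again zero-dimensional, and the short exact sequence $0\to S/\tilde I\xrightarrow{x_1}S/I\to S/(I+(x_1))\to 0$ combined with $I+(x_1)\subsetneq S$ gives $\ell(S/\tilde I)<\ell$. A generator-by-generator computation of $(I^\wp:x_{1,1})$ in $K[\mathcal{S}\setminus\{v\}]$, combined with the relabeling $x_{1,k+1}\leftrightarrow y_{1,k}$ (for $1\leq k\leq b_1-1$) and $x_{i,k}\leftrightarrow y_{i,k}$ (for $i\geq 2$), shows that the generators of the Stanley--Reisner ideal of $\link_\Delta(v)$ are precisely the polarizations of the generators $v_j/\gcd(v_j,x_1)$ of $\tilde I$, computed with respect to the bounds $(b_1-1,b_2,\ldots,b_n)$. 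The delicate point is that the minimal polarization bounds of $\tilde I$ may be strictly smaller than $(b_1-1,b_2,\ldots,b_n)$: each excess variable then appears in no generator and is consequently a cone point of the Stanley--Reisner complex. Therefore $\link_\Delta(v)\cong\Delta(\tilde I)\ast\sigma'$ for some full simplex $\sigma'$, and the inductive hypothesis applied to $\tilde I$, together with the cone construction, delivers vertex decomposability of $\link_\Delta(v)$.

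Combining the shedding condition (i) with the vertex decomposability of both $\del_\Delta(v)$ and $\link_\Delta(v)$ completes the inductive step, and hence the proof.
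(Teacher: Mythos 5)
Your proof is correct and follows essentially the same route as the paper's: the shedding vertex $x_{i,1}$ for a variable with $b_i\geq 2$, the deletion realized as a cone over $\Delta$ of the ideal induced in $K[x_2,\ldots,x_n]$ (the paper's $J_1$), and the link identified with $\Delta(I:x_i)$ (the paper's $J_2$), all driven by induction. The only differences are cosmetic: you induct on $\ell(S/I)$ rather than on $\sum_i b_i$, and you are in fact more explicit than the paper about the possible drop in the polarization bounds of $I:x_1$, which you correctly dispose of via cone points.
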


\begin{proof}
By assumption, for each $1\leq i\leq n$ there exists $b_i \geq 1$ such that $x_i^{b_i}\in G(I)$. Then $\Delta(I)$ is a simplicial complex on $\mathcal{S}=\{x_{1,1},\dots, x_{1,b_1},\dots, x_{n,1},\dots,x_{n,b_n}\}$. We proceed by induction on $\sum_{i=1}^n b_i$. If $\sum_{i=1}^n b_i = n$, then $I=(x_1,\dots,x_n)$, which is a trivial case. Suppose that $\sum_{i=1}^n b_i > n$. Hence we may assume $b_n > 1$.

We first show that the vertex $x_{n,1}$  is a shedding vertex of $\Delta(I)$. Clearly,
$$\del_{\Delta(I)} (x_{n,1}) = \{ F : F \in \Delta(I), x_{n,1} \notin F \} \cup \{ F \setminus \{x_{n,1}\} : F \in \Delta(I), x_{n,1} \in F \}.$$
Obviously, any facet  of $\del_{\Delta(I)} (x_{n,1})$ with  $x_{n,1} \notin F$ is a facet of $\Delta(I)$.
On the other hand,  if we consider $F \setminus \{x_{n,1}\}$ with $F \in \mathcal{F}(\Delta(I))$ and $x_{n,1} \in F$, then $F \setminus \{x_{n,1}\}$ is not a facet of $\del_{\Delta(I)} (x_{n,1})$. Indeed, since $F \in \mathcal{F}(\Delta(I))$, there exists $u \in \Mon(S \setminus I)$ such that $\phi(u) = P_{\mathcal{S}\setminus F}$. Let $t$ be the largest integer such that $x_n^t$ divides $u$. Then $x_{n,t+1}\in P_{\mathcal{S}\setminus F}$ and so $x_{n,j}\in F$ for all $j\neq t+1$. Since $x_{n,1}\in F$, we have $t+1\neq 1$. Let $u'=u/x_n^t$. Then $u'\in \Mon(S\setminus I)$ and $\phi(u')= P_{((\mathcal{S}\setminus F)\setminus \{x_{n,t+1}\}) \cup \{x_{n,1}\}}$. Thus $G=(F\setminus \{x_{n,1}\})\cup \{x_{n,t+1}\} \in \mathcal{F}(\Delta(I))$. Since $G\in \del_{\Delta(I)} (x_{n,1})$, the claim follows. Consequently, $\mathcal{F}(\del_{\Delta(I)} (x_{n,1})) = \{ F : F \in \mathcal{F}(\Delta(I)), x_{n,1} \notin F \}$  which implies that $x_{n,1}$ is a shedding vertex of $\Delta(I)$.

We now prove that $\del_{\Delta(I)} (x_{n,1})$ and $\link_{\Delta(I)}(x_{n,1})$ are vertex decomposable.

First we consider $\del_{\Delta(I)} (x_{n,1})$. Let $J_1$ be the ideal in $S$ with $\Mon(S \setminus J_1)=\{u:\ u\in \Mon(S\setminus I), x_n \text{ does not divide }  u\}$. Then $\Delta(J_1)$ is a simplicial complex on $\mathcal{S} \setminus \{x_{n,1},\dots,x_{n,b_n}\}$. By using Corollary~\ref{facets} we see that
$$
\del_{\Delta(I)}(x_{n,1}) =x_{n,b_n}\ast(x_{n,b_n-1}\ast(\cdots\ast(x_{n,2}\ast\Delta(J_1)))).
$$
Our induction hypothesis implies that $\Delta(J_1)$ is vertex decomposable, and hence  $\del_{\Delta(I)}(x_{n,1}) $ is vertex decomposable.

As for $\link_{\Delta(I)}(x_{n,1})$, let $\Gamma$ be the simplicial complex whose faces are obtained from the faces of $\link_{\Delta(I)}(x_{n,1})$ as follows: for every $F \in \link_{\Delta(I)}(x_{n,1})$, we replace each $x_{n,j} \in F$ by $x_{n,j-1}$. Hence $\Gamma$ is a simplicial complex on $\mathcal{S}\setminus \{x_{n,b_n}\}$ and\break $\Gamma \cong \link_{\Delta(I)}(x_{n,1})$. Let $J_2$ be the monomial ideal in $S$ such that $\Mon(S \setminus J_2) = \{u/x_n : u \in \Mon(S \setminus I), x_n \text{ divides } u\}$. Then Corollary~\ref{facets} implies that $\Gamma=\Delta(J_2)$, which is vertex decomposable by induction hypothesis. It follows that $\link_{\Delta(I)}(x_{n,1})$ is vertex decomposable, as desired.
\end{proof}

\section{Powers}

In this section we study the powers of $L(I)$. The main result is

\begin{Theorem}
\label{powers}
Let $I\subset S$ be a monomial ideal with $\dim S/I=0$. Then $L(I)^k$ has linear quotients for all $k$. In particular, all powers of $L(I)$ have a linear resolution.
\end{Theorem}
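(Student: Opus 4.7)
I will prove the theorem by induction on the length $\ell := \ell(S/I)$. The case $\ell = 1$ is trivial because $L(I)$ is then a principal ideal. For $\ell \ge 2$, following the proof of Theorem~\ref{quotients}, I pick a monomial $u = x_1^{a_1}\cdots x_n^{a_n} \in \Mon(S\setminus I)$ that is maximal in the divisibility order (so $x_i u \in I$ for every $i$), set $U := x_{1,a_1+1}\cdots x_{n,a_n+1} \in G(L(I))$ and $I' := I+(u)$. Then $\ell(S/I') = \ell - 1$, one has $L(I) = L(I') + (U)$, and by the inductive hypothesis every power $L(I')^m$ has linear quotients.

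Using the decomposition $L(I)^k = \sum_{j=0}^{k} U^j L(I')^{k-j}$, I stratify $G(L(I)^k)$ by the invariant
\[
\nu(g) := \max\bigl\{j \in \{0,\ldots,k\} \,:\, U^j \mid g \text{ and } g/U^j \in L(I')^{k-j}\bigr\}.
\]
I order the generators of $L(I)^k$ first by $\nu$ (ascending, so that $U^k$ is the top), and then within each level $j$ by the inductive linear-quotient order on $G(L(I')^{k-j})$ transported via multiplication by $U^j$ (restricted to those generators that yield a genuine level-$j$ element of $G(L(I)^k)$). The within-level linear-quotient property is then immediate from the inductive hypothesis, since multiplication by the fixed monomial $U^j$ commutes with colon computations.

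The main step is the cross-level analysis. Theorem~\ref{quotients} identifies the colon $L(I') : U$ with the variable ideal $Q = (x_{i,l} : 1 \le i \le n,\ 1 \le l \le a_i)$. Given $g = U^j h$ at level $j \ge 1$, for every $x_{i,l} \in Q$ the identity
\[
x_{i,l} \cdot g \;=\; x_{i,a_i+1}\cdot V\cdot U^{j-1}\cdot h,\qquad V := (U/x_{i,a_i+1})\,x_{i,l} \in G(L(I')),
\]
places $x_{i,l}\,g$ inside $U^{j-1}L(I')^{k-j+1}$, so that a minimal generator of $L(I)^k$ dividing the right-hand side sits at $\nu$-level at most $j-1$, hence strictly below $g$ in our order, and exhibits $x_{i,l}$ as a member of the colon ideal $(g'' : g'' < g) : g$.

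The main obstacle is a delicate exponent-vector bookkeeping: one must verify that the produced minimal generator has $\nu$-level strictly smaller than $j$ (using the maximality built into the definition of $\nu(g)$), and that the variables assembled above---those in $Q$ from cross-level witnesses, together with the inductive within-level witnesses---generate the entire colon and cover every $g' < g$. For a lower-level $g'$, comparing the $x_{i,a_i+1}$-exponents of $g$ and $g'$ and exploiting the lower-set structure of $\Mon(S\setminus I)$ produces an index $i$ for which some $x_{i,l} \in Q$ divides $g'/\gcd(g',g)$. Once the linear-quotient property is established for every $k$, the linear resolution claim follows at once, because a monomial ideal generated in a single degree with linear quotients has a linear resolution.
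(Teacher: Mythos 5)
Your proposal takes a genuinely different route from the paper, which does not induct on $\ell(S/I)$ at all for the powers: the paper writes each $u\in G(L(I)^k)$ as $u=u_1'\cdots u_n'$ with $u_i'=x_{i,j(i)_1}\cdots x_{i,j(i)_k}$, totally orders the generators by comparing the blocks $u_i'$ lexicographically, and for $v<u$ produces the required witness $w$ in one stroke by swapping a single variable $x_{i,j(i)_\ell}$ for one with a smaller second index; the only fact needed is that lowering a second index keeps each factor inside $G(L(I))$, because $\Mon(S\setminus I)$ is closed under taking divisors. Your stratified induction is an interesting alternative in principle, but as written it has a genuine gap exactly at the step you yourself flag as ``the main obstacle,'' and one of the intermediate claims supporting it is false.

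Specifically, the assertion that the cross-level witness $g''=V\,U^{j-1}h$ ``sits at $\nu$-level at most $j-1$'' is wrong: membership in $U^{j-1}L(I')^{k-j+1}$ gives $\nu(g'')\geq j-1$, not $\leq j-1$, and whenever $x_{i,a_i+1}$ divides $h$ one checks that $g''=U^j\cdot\bigl(x_{i,l}h/x_{i,a_i+1}\bigr)$ with $x_{i,l}h/x_{i,a_i+1}\in L(I')^{k-j}$ (since $\Mon(S\setminus I')$ is closed under divisors), so $\nu(g'')\geq j$. A concrete instance: for $I=(x_1^2,x_2^2)$, $u=x_1x_2$, $U=x_{1,2}x_{2,2}$, $k=2$, take $g=U\cdot(x_{1,2}x_{2,1})$ at level $j=1$ and the colon variable $x_{1,1}$; the produced witness is $U\cdot(x_{1,1}x_{2,1})$, which lies at the \emph{same} level $1$. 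The maximality in the definition of $\nu(g)$ does not repair this. To salvage the argument you would need that every such same-level witness $U^j h'$, $h'=x_{i,l}h/x_{i,a_i+1}$, precedes $U^jh$ in the transported within-level order, i.e., that your recursively defined order on $G(L(I')^{k-j})$ (built from a \emph{different} maximal monomial $u'$ of $\Mon(S\setminus I')$) is monotone under lowering second indices -- a global compatibility you neither state nor prove. Similar unverified points remain in the within-level step (the level-$j$ stratum is only a subsequence of $U^jG(L(I')^{k-j})$, and linear quotients do not automatically pass to subsequences, since the inductive witness may land at a higher level and hence after $g$) and in the coverage of an arbitrary lower-level $g'$. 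The paper's explicit componentwise order makes all of these verifications immediate; your plan leaves its central step unproven.
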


\begin{proof}
Any  $u\in L(I)^k$ can be written in the form $u=u_1'u_2'\cdots u_n'$, where $u_i'=x_{i,j(i)_1}x_{i,j(i)_2}\cdots x_{i,j(i)_k}$ for $i=1,\ldots,n$ with $j(i)_1\leq j(i)_2\leq \cdots \leq j(i)_k$. We define a partial order on  $G(L(I)^k)$ by setting $v\leq u$, if,  with respect to the  lexicographical  order, $u_i'\leq v_i'$ for all $i$, and we extend this partial order to a total order on the set of monomial generators of $L(I)^k$.

Now let $v,u\in L(I)^k$ with $v<u$. We need to show that there exists $w\in L(I)^k$ with $w<u$ such that $ w/\gcd(w,u)$ is of degree $1$ and such that $w/\gcd(w,u)$ divides $v/\gcd(v,u)$. Indeed, since $v<u$, there exists $i$ such that $u_i'< v_i'$ in the lexicographical order. Thus if $v_i'=x_{i,j'(i)_1}x_{i,j'(i)_2}\cdots x_{i,j'(i)_k}$ with $j'(i)_1\leq j'(i)_2\leq \cdots \leq j'(i)_k$, then there exists $\ell$ such that $j'(i)_s=j(i)_s$ for $s<\ell$ and $j'(i)_\ell<j(i)_\ell$. We let $w=w_1'w_2'\ldots w_n'$ with $w_t'=u_t'$  for $t\neq i$ and $w_i'=x_{i,j'(i)_\ell}(u_i'/x_{i,j(i)_\ell})$.

It is clear that $w<u$. Furthermore, $w\in G(L(I)^k)$. In fact, $u=u_1\cdots u_k$ with $u_i\in L(I)$,  and $x_{i,j(i)_\ell}$ divides one of these factors, say it divides $u_r$. Then  $\bar{u}_r=x_{i,j'(i)_\ell}(u_r/x_{i,j(i)_\ell})$ belongs to $L(I)$ since  $j'(i)_\ell<j(i)_\ell$,  and hence $w=u_1\cdots \bar{u}_r\cdots u_k$ belongs to $G(L(I)^k)$. Note further that $w/\gcd(w,u)=x_{i,j'(i)_\ell}$ and that $x_{i,j'(i)_\ell}$ divides $v/\gcd(v,u)$. This completes the proof.
\end{proof}

\begin{Corollary}
\label{depth}
For $i=1,\ldots,n$,  let $b_i$ be the smallest integer such that $x_i^{b_i}\in I$. Then
\begin{eqnarray*}
\depth S^\wp/L(I)^k\! =\!\sum_{i=1}^nb_i-\max\{\deg (\lcm(u_1,\ldots, u_k))\: u_1,\ldots, u_k\in \Mon(S\setminus I)\}-1.
\end{eqnarray*}
In particular, $\depth S^\wp/L(I)^k=n-1$ for all $k\geq n$, and $$\depth S^\wp/L(I)^k>\depth S^\wp/L(I)^{k+1},$$ as long as
$\depth S^\wp/L(I)^k>n-1$.
\end{Corollary}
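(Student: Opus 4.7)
The plan is to apply the general theory of ideals with linear quotients, building on Theorem~\ref{powers}, and then translate the resulting combinatorial data into the language of the statement. By Theorem~\ref{powers}, $L(I)^k$ has linear quotients with respect to the total order on $G(L(I)^k)$ constructed in its proof. For any monomial ideal $J$ with linear quotients generated in a single degree one has $\projdim(S^\wp/J) = 1 + \max_{u \in G(J)} |\set(u)|$, where $\set(u) = \{x : xu \in (w \in G(J) : w < u)\}$. Since $S^\wp$ is regular of Krull dimension $\sum_{i=1}^n b_i$, the Auslander--Buchsbaum formula reduces the corollary to showing that $\max_u |\set(u)| = D(k)$, where $D(k)$ denotes the maximum appearing in the claimed formula.

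The heart of the argument is an explicit description of $\set(u)$. Fix $u \in G(L(I)^k)$ and write $u = u_1 \cdots u_k$ with $u_r$ the generator of $L(I)$ corresponding to $v_r = x_1^{a_1(r)} \cdots x_n^{a_n(r)} \in \Mon(S \setminus I)$, so that $u_r = x_{1,a_1(r)+1}\cdots x_{n,a_n(r)+1}$. I would prove that
\[
\set(u) = \{x_{i,j} : 1\leq i \leq n,\ 1 \leq j \leq \max_r a_i(r)\}.
\]
For ``$\supseteq$'', given $j \leq a_i(s)$, let $v_s'$ be obtained from $v_s$ by lowering the $x_i$-exponent from $a_i(s)$ to $j-1$; since $v_s'$ divides $v_s$, we have $v_s' \in \Mon(S \setminus I)$, and the corresponding generator $u_s'$ of $L(I)$ equals $x_{i,j} \cdot u_s / x_{i,a_i(s)+1}$. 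Setting $w := u_1 \cdots u_s' \cdots u_k = u \cdot x_{i,j}/x_{i,a_i(s)+1}$, I obtain an element of $G(L(I)^k)$ that divides $x_{i,j} u$ and is strictly smaller than $u$ in the total order of Theorem~\ref{powers}, since $j < a_i(s)+1$; hence $x_{i,j} \in \set(u)$. For ``$\subseteq$'', if $x_{i,j} \in \set(u)$, choose $w < u$ in $G(L(I)^k)$ dividing $x_{i,j} u$. Because $\deg w = nk = \deg u$ and every generator of $L(I)^k$ carries exactly $k$ variables from the $i$-th row, a support count forces $w = u \cdot x_{i,j}/x_{i,a_i(s)+1}$ for some $s$, and then $w < u$ forces $j \leq a_i(s)$. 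Thus $|\set(u)| = \sum_i \max_r a_i(r) = \deg \lcm(v_1,\ldots,v_k)$, and maximizing over $u$ recovers $D(k)$, establishing the main formula.

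For the asymptotic assertion, $D(k) \leq \sum_i(b_i-1) = \sum b_i - n$ since every exponent satisfies $a_i(r) < b_i$. For $k \geq n$, equality is attained by the $n$-tuple $v_i := x_i^{b_i-1}$ (padded arbitrarily for $k > n$); these monomials lie in $\Mon(S \setminus I)$ because $x_i^{b_i}$ is the only minimal generator of $I$ that is a pure power of $x_i$, and their $\lcm$ is $x_1^{b_1-1}\cdots x_n^{b_n-1}$ of degree $\sum b_i - n$. Hence $\depth(S^\wp/L(I)^k) = n-1$ for all $k \geq n$. On the other hand, if $\depth(S^\wp/L(I)^k) > n-1$ then $D(k) < \sum b_i - n$, so in any optimal $k$-tuple some row $i$ has $\max_r a_i(r) < b_i - 1$; appending $v_{k+1} := x_i^{b_i-1}$ strictly increases the $\lcm$-degree, giving $D(k+1) > D(k)$ and the strict depth descent.

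The main obstacle I expect is the ``$\subseteq$'' inclusion in the computation of $\set(u)$: one must combine a degree and support analysis in $S^\wp$ (ruling out every candidate $w$ other than the one constructed in ``$\supseteq$'') with a precise invocation of the total order from Theorem~\ref{powers} to extract the inequality $j \leq a_i(s)$. Once that is secured, everything else reduces to routine applications of Auslander--Buchsbaum and elementary manipulations inside $\Mon(S \setminus I)$.
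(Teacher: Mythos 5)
Your proposal is correct and follows essentially the same route as the paper: invoke the linear quotients of $L(I)^k$ from Theorem~\ref{powers}, identify the colon ideal of each generator $u$ as the ideal generated by $\{x_{i,j}: 1\le j\le \max_r a_i(r)\}$ so that $|\set(u)|=\deg\lcm(v_1,\ldots,v_k)$, and then apply the depth formula for ideals with linear quotients (your Auslander--Buchsbaum plus $\projdim$ argument is the same as the paper's citation of \cite[Formula (1)]{HH}), with the asymptotic statements handled by the same choice of $u_i=x_i^{b_i-1}$ and the same augmentation argument. The only difference is that you spell out both inclusions for $\set(u)$ explicitly, whereas the paper refers back to the proof of Theorem~\ref{powers}.
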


\begin{proof}
In general, let  $J\subset K[x_1,\ldots,x_n]$ be  a graded ideal generated by a sequence $f_1, \ldots , f_s$ with linear quotients, and denote by $q_j(J)$ the minimal number of linear forms generating the ideal $(f_1,f_2, \ldots, f_{j-1}) : f_j$.
Then  $\depth K[x_1,\ldots,x_n]/J= n-q(J)-1$, where $q(J)=\max\{q_j(J)\ :\ 2\leq j\leq s\}$, see \cite[Formula (1)]{HH}.

We apply this formula to $S^\wp/L(I)^k$. Since the Krull dimension of $S^\wp$ is equal to $\sum_{i=1}^nb_i$, it remains to be shown that
\begin{eqnarray}
\label{aa}
q(L(I)^k)=\max\{\deg (\lcm(u_1,u_2,\ldots, u_k))\:\; u_1,u_2,\ldots,u_k\in \Mon(S\setminus I)\}.
\end{eqnarray}
To see this, let  $u_1,u_2,\ldots,u_k\in \Mon(S\setminus I)$ where  $u_j=x_{1}^{a_1(j)}\cdots x_{n}^{a_n(j)}$ for $j=1,\ldots,k$. Then
$u=u_1'u_2'\cdots u_n'$ with $u_i' =\prod_{j=1}^kx_{i, a_i(j)+1}$ is a generator of $L(I)^k$. We may assume that $u$ is the $j$-th element  in the given total order of the elements of $G(L(I)^k)$. As shown in the proof of Theorem~\ref{powers}, $q_j(L(I)^k)$ is the cardinality of the set
\[
\{x_{1,1},\ldots, x_{1,c_1}, x_{2,1},\ldots,x_{2,c_2},\ldots,x_{n,1},\ldots,x_{n,c_n}\},
\]
where $c_i =\max\{a_i(1),\ldots,a_i(k)\}$ for $i=1,\ldots,n$. If follows that $$q_j(L(I)^k)= \deg (\lcm(u_1,u_2,\ldots, u_k)), $$ and hence equation (\ref{aa}) follows.

Suppose now that $k\geq n$. Then we may choose $u_i=x_i^{b_i-1}$ for $i=1,\ldots,n$ and $u_i\in \Mon(S\setminus I)$ arbitrary for $i>n$, and obtain
$\deg(\lcm(u_1,u_2,\ldots u_k))=\sum_{i=1}^n(b_i-1)= \sum_{i=1}^nb_i-n$. Since this is the largest possible least common multiple of sequences of elements of $\Mon(S\setminus I)$, it follows that  $\depth S^\wp/L(I)^k=n-1$ for all $k\geq n$.

Finally, suppose that $\depth S^\wp/L(I)^k>n-1$.  Then the formula for $\depth S^\wp/L(I)^k$ implies that $\max\{\deg (\lcm(u_1,\ldots, u_k))\: u_1,\ldots, u_k\in \Mon(S\setminus I)\}<\sum_{i=1}^n(b_i-1).$

Let $x_1^{a_1}\cdots x_n^{a_n}=\lcm(u_1,\ldots, u_k)$ attain this maximal degree. Since $\sum_{i=1}^na_i<\sum_{i=1}^n(b_i-1)$, there exists an index $i$ such that $a_i<b_i-1$. Let $u_{k+1}=x_i^{b_i-1}$. Then $\deg(\lcm(u_1,\ldots, u_k,u_{k+1}))> \deg (\lcm(u_1,\ldots, u_k))$. Consequently,  $\depth S^\wp/L(I)^k>\depth S^\wp/L(I)^{k+1},$ as desired.
\end{proof}

\end{document}